\tikzset{commutative diagrams/.cd}
\newcommand{\Z}{{\mathbb Z}}
\newcommand{\Q}{{\mathbb Q}}
\newcommand{\C}{{\mathbb C}}
\newcommand{\R}{{\mathbb R}}
\def\P{{\mathbb P}}
\DeclareMathOperator{\codim}{codim}
\newcommand{\subeq}{\subseteq}
\newcommand{\ttilde}{\widetilde}
\newcommand{\incl}{\hookrightarrow}
\DeclarePairedDelimiter{\abs}{\lvert}{\rvert}
\DeclarePairedDelimiter{\set}{\{}{\}}
\DeclarePairedDelimiter{\pairing}{\langle}{\rangle}
\DeclarePairedDelimiter{\parens}{\lparen}{\rparen}
\DeclarePairedDelimiter\floor{\lfloor}{\rfloor}
\newcommand{\tu}{\textup}
\theoremstyle{theorem}
\newtheorem{theorem}{Theorem}[section]
\newtheorem{corollary}[theorem]{Corollary}
\newtheorem{lemma}[theorem]{Lemma}
\newtheorem{proposition}[theorem]{Proposition}
\theoremstyle{definition}
\newtheorem{definition}[theorem]{Definition}
\newtheorem{example}[theorem]{Example}
\newtheorem{notation}[theorem]{Notation}
\newtheorem{conjecture}[theorem]{Conjecture}
\newtheorem{question}[theorem]{Question}
\newtheorem{remark}[theorem]{Remark}
\newtheorem*{remark*}{Remark}
\theoremstyle{remark}
\numberwithin{equation}{section}
\DeclareMathOperator{\Conf}{Conf}
\DeclareMathOperator{\gr}{gr}
\DeclareMathOperator{\Gr}{Gr}
\def \cA{{\mathcal{A}}}
\DeclareMathOperator{\rk}{rk}
\DeclareMathOperator{\Ind}{Ind}
\newcommand{\dee}{\mathrm{d}}
\begin{document}
\title[Configuration spaces on punctured varieties]{Cohomology of configuration spaces on punctured varieties}
\author{Yifeng Huang}
\date{\today}
\address{Dept of Mathematics, 1984 Mathematics Rd, Vancouver, BC V6T 1Z2, Canada}
\email{huangyf@math.ubc.ca}
\keywords{Configuration spaces, mixed Hodge structure, spectral sequences}
\subjclass{55R80, 16E45, 55T}

\begin{abstract}
In the theory of configuration spaces, ``splitting'' usually refers to the phenomenon that the configuration spaces on a manifold and those on its punctured version are closely related cohomologically. We prove a splitting theorem that is equivariant and mixed-Hodge-theoretic; both are new features in such results. As an application, we determine the generating function for the mixed Hodge numbers of the unordered configuration spaces of a multi-punctured elliptic curve. 
\end{abstract}

\maketitle

\section{Introduction}

For $n\in \Z_{\geq 0}$ and $X$ a topological space or a variety (always assumed to be connected smooth complex quasi-projective in this paper), let $\Conf^n(X)$ denote the configuration space of $n$ unlabeled distinct points on $X$. A recurring theme in configuration spaces is the interaction between $H^*(\Conf^*(X);k)$ and $H^*(\Conf^*(X-P);k)$ with $k=\Z$, $\Z/p\Z$, or $\Q$, often referred to as ``splitting''; see \cite{gorjunov1981cohomology,cohen1993mapping,napolitano2003cohomology,kallel2008symmetric}. In rational coefficients, the most general result up to date is by Kallel \cite[Thm.~1.5]{kallel2008symmetric}:
\begin{equation}\label{eq:kallel}
H^i(\Conf^n(X-P);\Q)\cong \bigoplus_{k=0}^\infty H^{i-(2d-1)k}(\Conf^{n-t}(X);\Q),
\end{equation}
if $X$ can be obtained from removing $r\geq 1$ points from a connected closed orientable manifold $M$ of dimension $2d$. We note it is necessary that $r\neq 0$, so in particular, $X$ cannot be compact without boundary (for example, compare \cite[Tables~2~and~3]{napolitano2003cohomology}). 

The existing approaches to splitting rely on topological constructions that do not stay within the category of complex varieties. Napolitano's approach in \cite{napolitano2003cohomology} to a $\dim_\R X=2$, $\Z$-coefficient analogue of \eqref{eq:kallel} uses a spectral sequence associated with a real cell decomposition. Kallel's approach in \cite{kallel2008symmetric} involves truncated symmetric products, which require gluing in the construction. In this paper, we describe a new approach to splitting that is purely algebraic and explicit. Our approach is based on new observations (Lemmas \ref{lem:decomposition} and \ref{lem:compat-diff}) about a suitable modification of the Cohen--Taylor--Totaro spectral sequence \cite{cohentaylor1978,totaro1996configuration}. The use of such a spectral sequence allows to keep track of mixed Hodge theory and $\ell$-adic cohomology due to Deligne \cite{deligne1971hodge2}, but somewhat surprisingly, this spectral sequence has not been used before in the context of splitting. 

For the rest of this paper, $H^i(-):=H^i(-;\Q)$.

\subsection{Main results}
Our main result says that the splitting \eqref{eq:kallel} is mixed-Hodge-theoretic, equivariant, and applicable to more general non-compact varieties. We introduce the following notion, which is key to our results. 

\begin{definition}\label{def:purity}
    ~
    \begin{itemize}
        \item [(i)]
        Given a rational number $\lambda\geq 0$, we say a variety $X$ is \emph{pure of slope} $\lambda$ if the mixed Hodge structure of $H^i(X,\Q)$ is pure of weight $\lambda\cdot i$ for any integer $i\geq 0$, namely, the mixed Hodge number $h^{p,q;i}(X)$ is zero unless $p+q=\lambda\cdot i$. In particular, $H^i(X,\Q)=0$ for all $i$ such that $\lambda\cdot i$ is not an integer. By \cite[\S 7, p.\ 82]{deligne1975poids}, the slope of a smooth variety (if exists) must satisfy $1\leq \lambda\leq 2$.
        \item [(iii)] We say $X$ is a \emph{non-compact pure variety} if $X$ is a (connected smooth complex) variety that is not compact and is pure of some slope $\lambda$. 
        \item [(iv)]
        We say $Y$ is a \emph{possibly punctured non-compact pure variety} if there is $r\geq 0$ and a non-compact pure variety $X$ such that $Y$ is $X$ minus $r$ points.
    \end{itemize}
\end{definition}

We state first the non-equivariant version of our splitting.

\begin{theorem}[Splitting, non-equivariant]\label{thm:A}
    Let $Y$ be a possibly punctured non-compact pure variety of complex dimension $d$. Then for $p,q,i\geq 0$ and $P\in Y$, 
    \begin{equation}\label{eq:splitting_hodge_direct}
    h^{p,q;i}(\Conf^n(Y-P)) = \sum_{k\geq 0} h^{p-kd,q-kd;i-k(2d-1)}(\Conf^{n-k}(Y)),
    \end{equation}
    where the summand is zero if any of $p-kd,q-kd,i-k(2d-1),n-k$ is negative.
\end{theorem}

By fixing $i$ and summing over all $p,q$, Theorem \ref{thm:A} recovers Kallel's splitting when the closed orientable manifold $M$ in the setting of \eqref{eq:kallel} is a smooth compact complex variety. This is because $M$ minus one point is a non-compact and pure of slope 1 by \cite[Thm.~2.10]{dupont2016purity}. 
Example \ref{eg:pure-slope} shows that non-compact pure varieties exist in greater generality. Theorem \ref{thm:A} thus gives some first new examples where \eqref{eq:kallel} holds, such as smooth plane curve complements in $\P^2_{\C}$ (Example \ref{eg:pure-slope}(iii)). 

We now move on to the equivariant version. Below, we gather some standard notation and terminology.

\begin{notation}
    Given a topological space or a variety $X$, let $F(X,n)$ be the configuration space of $n$ \emph{labeled} distinct points on $X$, equipped with an action by the symmetric group $S_n$.

    A mixed Hodge structure is a $\Q$-vector space $M$ equipped with filtrations $F^{\bullet}$ on $M_\C$ and $W_{\bullet}$ on $M$ satisfying certain conditions (see \cite{deligne1971hodge2}). Let $M^{p,q}:=\Gr_F^p \Gr_{p+q}^W M_\C$, a complex vector space called the $(p,q)$ part of $M$. For any complex quasi-projective variety $V$, $H^i(V;\Q)$ is canonically equipped with a mixed Hodge structure. Let $H^{p,q;i}(V):=(H^i(V;\C))^{p,q}$ denote the $(p,q)$ part of the mixed Hodge structure of $H^i(V;\Q)$, so $h^{p,q;i}(V)=\dim_\C H^{p,q;i}(V)$. If a finite group $G$ acts on $V$, then $G$ acts on $H^{p,q;i}(V)$ by pulling back, making $H^{p,q;i}(V)$ a complex $G$-representation. 

    To a mixed Hodge structure $M$, we associate a $\Z^2$-bigraded complex vector space
    \begin{equation}
        \gr M:=\bigoplus_{p,q} M^{p,q}
    \end{equation}
    with grading $(p,q)$ referred to as the Hodge type. If a finite group $G$ acts on $M$ as a mixed Hodge structure, then we consider $\gr M$ as a $\Z^2$-graded complex $G$-representation. For $m\in \Z$, let $M(m)$ denote the Tate twist of $M$. It is a mixed Hodge structure that satisfies $M(m)^{p,q}=M^{p+m,q+m}$.

    For $0\leq k\leq n$, consider the subgroup $S_{n-k}:=S_{n-k}\times 1$ of $S_n$ consisting of permutations that fix the last $k$ letters. For a representation $W$ of $S_{n-k}$, let $\Ind_{S_{n-k}}^{S_n} W$ denote the induced $S_n$-representation.
\end{notation}

The equivariant version of our splitting is as follows.
\begin{theorem}[Splitting, equivariant]\label{thm:C}
    Let $Y$ be a possibly punctured non-compact pure variety of complex dimension $d$. Then for $i\geq 0$ and $P\in Y$, we have an isomorphism of $\Z^2$-graded $S_n$-representations
    \begin{equation}\label{eq:equivariant_mhs}
    \gr H^i(F(Y-P,n))\cong \bigoplus_{k\geq 0} \Ind_{S_{n-k}}^{S_n} \gr H^{i-k(2d-1)}(F(Y,n-k))(-kd).
    \end{equation}
\end{theorem}

Theorem \ref{thm:C} implies Theorem \ref{thm:A} by taking the $S_n$-invariants and extracting the dimension of the $(p,q)$ graded piece. To read off the equivariant non-Hodge-theoretic splitting from Theorem \ref{thm:C}, we simply remove $\gr$ and the Tate twist:
\begin{equation}\label{eq:equivariant}
    H^i(F(Y-P,n))\cong_{S_n} \bigoplus_{k\geq 0} \Ind_{S_{n-k}}^{S_n} H^{i-k(2d-1)}(F(Y,n-k)).
\end{equation}

This puts splitting into the framework of \emph{representation stability} (see \cite{church2012homological,churchfarb2013representation,cef2014representation,cef2015fi}). By \cite[Thm.~4.1.7 and 6.4.3]{cef2015fi}, for any $i\geq 0$ and non-compact variety $Y$, the character of the $S_n$-representation $H^i(F(Y,n))$ for all $n$ is given by a single polynomial $P_{Y,i}\in \Q[x_1,x_2,\dots]$, called the \emph{character polynomial} of $H^i(F(Y,-))$:
\begin{equation}\label{eq:charpoly}
    \chi_{H^i(F(Y,n))}(\sigma)=P_{Y,i}(x_1(\sigma),x_2(\sigma),\dots) \text{ for all $n\geq 0$ and $\sigma\in S_n$},
\end{equation}
where $x_k(\sigma)$ is the number of $k$-cycles in $\sigma$. 

Another way to encode $H^i(F(Y,-))$ is through a Schur-positive symmetric function. Let $\mathcal{P}$ be the set of integer partitions, $\Lambda=\bigoplus_{\lambda\in \mathcal{P}} \Z s_\lambda$ be the ring of symmetric functions, where $s_\lambda$ is the Schur function. By \cite[Thm.~4.1.5 and 6.4.3]{cef2015fi}, the $\text{FI}\sharp$-module $H^i(F(Y,-))$ is of the form
\begin{equation}\label{eq:fisharp}
    H^i(F(Y,-)) = \bigoplus_{\lambda\in \mathcal{P}} c_\lambda M(V_{\lambda})
\end{equation}
for some unique $c_\lambda \in \Z_{\geq 0}$, where $M(-)$ is the functor in \cite[Thm.~4.1.5]{cef2015fi} and $V_\lambda$ is the $S_{\abs{\lambda}}$-representation associated with the partition $\lambda$. We define $C_{Y,i}:=\sum_{\lambda} c_\lambda s_\lambda \in \Lambda$, referred to as the \emph{Frobenius characteristic} of $H^i(F(Y,-))$. 

The data $P_{Y,i}$ and $C_{Y,i}$ are equivalent; the rule to convert from $C_{Y,i}$ to $P_{Y,i}$ can be extracted from \cite[p.~1874]{cef2015fi}. However, $C_{Y,i}$ is usually more succinct. For instance \cite[pp.~1839, 1842]{cef2015fi},
\begin{equation}
    P_{\C,2}=2\binom{x_1}{3}+3\binom{x_1}{4}+\binom{x_1}{2}x_2-\binom{x_2}{2}-x_3-x_4,
\end{equation}
while $C_{\C,2}=s_{31}+s_{21}$. In general these invariants are not known explicitly.

We now describe how these invariants associated with $Y-P$ and $Y$ relate.

\begin{corollary}\label{thm:D}
    Let $Y$ be a possibly punctured non-compact pure variety of complex dimension $d$. Then for $i\geq 0$ and $P\in Y$, we have
    \begin{align}
        P_{Y-P,i}(x_1,x_2,\dots)&=\sum_{k=0}^{\floor{\frac{i}{2d-1}}} k! \binom{x_1}{k} P_{Y,i-k(2d-1)}(x_1-k,x_2,\dots),\label{eq:charpoly}\\
        C_{Y-P,i}&=\sum_{k=0}^{\floor{\frac{i}{2d-1}}} s_1^k C_{Y,i-k(2d-1)}.\label{eq:frobchar}
    \end{align}
\end{corollary}

\subsection{Punctured Riemann surfaces}
As an application, we advance the computation of the mixed Hodge numbers of smooth algebraic curves. For $g,r\geq 0$, let $\Sigma_{g,r}$ denote a genus $g$ smooth projective algebraic curve minus $r$ points. Drummond-Cole and Knudson \cite{drummondcoleknudsen} explicitly computed the rational Betti numbers $h^i(\Conf^n(\Sigma_{g,r}))$ for all $i,n,g,r\geq 0$. The mixed Hodge numbers are determined in the cases $g=0, r\geq 1$ (\cite{lehrer1992ladic,kim1994weights}), $g\geq 0, r=0$ (\cite{pagaria2023extra}), and $g=r=1$ (\cite{cheonghuang2022betti}). We formulate a conjecture below that covers all non-compact cases. For a variety $X$, consider the generating function
\begin{equation}\label{eq:hx-def}
    h_X(x,y,u;t):=\sum_{p,q,i,n\geq 0} h^{p,q;i}(\Conf^n(X))x^p y^q (-u)^i t^n\in \Z[x,y,u][[t]].
\end{equation}
A priori, we know $h_X(1,1,u;t)=\sum_{i,n\geq 0} h^i(\Conf^n(X))(-u)^i t^n$ records all Betti numbers, and that $h_X(x,y,1;t)$ is rational and has a Macdonald-type factorization formula. The latter is a consequence of the Macdonald formula \cite[p.~116]{cheah1994cohomology}, the Poincar\'e duality, and a motivic formula for $\Conf^n(X)$ due to Vakil and Wood \cite[Prop.~5.9]{vakilwood2015discriminants}; for a detailed explanation, see \cite[\S 2]{cheonghuang2022betti}. 

\begin{conjecture}\label{conj:generating}
    Let $g\geq 0, r\geq 1$. Then 
    \begin{equation}\label{eq:conj:generating}
        h_{\Sigma_{g,r}}(x,y,u;t)=\frac{1}{(1+xyut)^{r-1}} \frac{\Phi_g\set{(1-xyt^2)(1-xt)^g(1-yt)^g}}{(1-t)(1-x^2 yu^2t^2)^g(1-xy^2u^2t^2)^g},
        \end{equation}
        where $\Phi_g$ is a $\Z[x,y]$-linear map defined by 
        \begin{equation}
        \Phi_g(t^j)=\begin{cases}
        u^j t^j,& 0\leq j\leq g;\\
        u^{j-1}t^j,& g+2\leq j\leq 2g+2.
        \end{cases}
        \end{equation}
\end{conjecture}

Conjecture \ref{conj:generating} is consistent with the known cases for $g=0, r\geq 1$ and $g=r=1$. When $u=1$, the conjecture recovers the aforementioned Macdonald-type formula for $h_{\Sigma_{g,r}}(x,y,1;t)$; note that $\Phi_g$ is the identity map when $u=1$, so the factorization survives. When $x=y=1$, the conjecture recovers \cite[Prop.~3.5]{drummondcoleknudsen}. 

The observation that the data in \cite[Prop.~3.5]{drummondcoleknudsen} can be organized by a piecewise shift of a highly factorizable rational generating function is new. We use an example to demonstrate how the combination of $\Phi_g$ and the sign convention in \eqref{eq:hx-def} ``explains'' a complicated numerator. One could extract from \cite[Prop.~3.5]{drummondcoleknudsen}:
\begin{equation}\label{eq:sigma32}
    \sum_{i,n\geq 0} h^i(\Conf^n(\Sigma_{3,1}))u^i t^n =  \frac{1+6ut+14u^2 t^2+14u^3t^3+14u^4t^5+14u^5t^6+6u^6t^7+u^7t^8}{(1-t)(1-u^2 t^2)^6}.
\end{equation}
A numerical check reveals that the numerator cannot be factorized. To understand the numerator further, one might attempt to shift the exponents of $u$ to align with those of $t$, getting
\begin{equation}
    1+6ut+14u^2 t^2+14u^3t^3+14u^5t^5+14u^6t^6+6u^7t^7+u^8t^8,
\end{equation}
but it is still irreducible in $\Z[u,t]$. However, if we negate the signs of the top half of the coefficients, we get an excellent factorization:
\begin{equation}
    1+6ut+14u^2 t^2+14u^3t^3-14u^5t^5-14u^6t^6-6u^7t^7-u^8t^8=(1-ut)(1+ut)^7.
\end{equation} 
The above process can be recovered from taking $g=3, r=1$ and $x=y=1, u\mapsto -u$ in \eqref{eq:conj:generating}. The above demonstration also indicates that some sign manipulation is inevitable even though we chose to avoid it in \eqref{eq:sigma32}.

Starting with the known case $g=r=1$ in \cite{cheonghuang2022betti} and applying Theorem \ref{thm:A} repetitively, we obtain:
\begin{corollary}\label{cor:generating}
    Conjecture \ref{conj:generating} holds for $g=1,r\geq 1$. Moreover for a fixed $g\geq 0$, Conjecture \ref{conj:generating} holds for all $r\geq 1$ if and only if it holds for $r=1$.
\end{corollary}

Note that one could not use $\Sigma_{g,0}$ to study Conjecture \ref{conj:generating} since Theorem \ref{thm:A} does not connect $\Sigma_{g,0}$ and $\Sigma_{g,1}$: in Theorem \ref{thm:A}, both varieties in comparison must be non-compact. The attempt to attack Conjecture \ref{conj:generating} using $\Sigma_{g,1}$ is an ongoing project with Eric Ramos.

\subsection{Remarks about our results}
\subsubsection{}
To fuel examples of our Theorems \ref{thm:A} and \ref{thm:C}, we provide some non-compact pure varieties below.

\begin{example}\label{eg:pure-slope}
    ~
    \begin{itemize}
        \item[(i)] Let $M$ be a smooth projective variety, $P\in M$, and $X=M-P$. Then $H^i(X)$ is pure of weight $i$ \cite[Thm.~2.10]{dupont2016purity}, so $X$ is pure of slope $1$.\label{eg:punc}
        \item[(ii)] Let $X$ be the complement of a hyperplane arrangement in $\C^d$ or a toric arrangement in $(\C^*)^d$. Then $H^i(X)$ is pure of weight $2i$ by \cite[Thm.~3.7, 3.8]{dupont2016purity}, so $X$ is pure of slope $2$. 
        \item[(iii)] Let $C$ be a smooth plane curve of genus $g$ in $\P^2$, and let $X=\P^2-C$. From the Gysin sequence (see the exposition \cite[\S 6]{durfee1983naive}), we obtain
        \begin{equation}
        H^i(X)=\left\{\hspace{-1ex}\begin{array}{lll}
        \Q, &\text{ pure of weight 0}, &i=0\\
        \Q^{2g}, &\text{ pure of weight 3}, &i=2\\
        0, & &i\neq 0,2.
        \end{array}\right.
        \end{equation}
        Hence $X$ is pure of slope $3/2$.
        \item[(iv)] Suppose $X$ is smooth and pure of slope $\lambda$, and $G$ is a finite group that acts on $X$ freely such that the scheme-theoretic quotient $X/G$ is also smooth. Then from $H^i(X/G)=H^i(X)^G$ we conclude that $X/G$ is also pure of slope $\lambda$. As a sample application, the generalized configuration space $F(\C,m)/G$ for a subgroup $G$ of $S_m$ is pure of slope $2$.
    \end{itemize}
\end{example}

\subsubsection{}
Nice behaviors of $h_X(x,y,u;t)$ often reflect mixed Hodge purity. For example, $H^i(\Conf^n(\Sigma_{g,r}))$ is pure of weight $2i$ if $g=0, r\geq 1$ (\cite{lehrer1992ladic,kim1994weights}) and pure of weight $\floor{3i/2}$ if $g=r=1$ (\cite{cheonghuang2022betti}), and they explain the corresponding cases of Conjecture \ref{conj:generating}. Note in addition that Church, Ellenberg, and Farb \cite{cef2014representation} beautifully use a purity argument to connect the cohomology of $\Conf^n(\R^2)=\Conf^n(\Sigma_{0,1})$ to Hasse--Weil zeta functions and polynomial statistics over finite fields. However, the other verified cases of Conjecture \ref{conj:generating} (as in Corollary \ref{cor:generating}) cannot be explained by such a purity argument. For example, if $X=\Sigma_{1,2}$, then even $\Conf^1(X)=X$ is not pure: $H^1(X)$ has a mixture of weight $1$ and $2$. 

The use of a shifted highly factorizable rational function in Conjecture \ref{conj:generating} is also forshadowed in our past work with Cheong \cite{cheonghuang2022betti} on the $g=r=1$ case. There, an equivalent but differently-formulated shift pattern is explained by purity with a quasi-linear (i.e., linear + periodic) weight function $w(i)=\floor{3i/2}$. However, \eqref{eq:conj:generating} (if true) would imply that for any $g\geq 2$, $H^i(\Conf^n(\Sigma_{g,1}))$ is not pure, so it is not yet clear what structure might lead to the shift. 

\subsubsection{}Note that Conjecture \ref{conj:generating} and Corollary \ref{cor:generating} advance a more general question.

\begin{question}\label{question}~
    \begin{itemize}
        \item [\tu{(i)}] Is $h_X(u;t):=\sum_{i,n\geq 0} h^i(\Conf^n(X))(-u)^i t^n$ rational for an even-dimensional orientable manifold $X$? 
        \item [\tu{(ii)}] Is $h_X(x,y,u;t)$ rational for a smooth complex variety $X$?
    \end{itemize}
\end{question}

At least for non-compact varieties $X$, representation stability and the theory of FI$\sharp$-modules says that for each fixed $i$, the behavior of $h^i(\Conf^n(X))$ (and even the finer datum $H^i(F(X,n))$ as an $S_n$-representation) for \emph{all} $n\geq 0$ can be explained by a single invariant, either $P_{X,i}(x_1,x_2,\dots)$ or equivalently $C_{X,i}$; see the discussion before Corollary \ref{thm:D}. However, Question~\ref{question} takes account of all $i$ at the same time, which suggests the possible need to understand the graded FI$\sharp$-\emph{algebra} $H^*(F(X,-))$ along the lines of \cite[Example~5.1.3]{cef2015fi}.

\subsection{Methods and organizations}
Our approach uses a slight generalization of the Cohen--Taylor--Totaro spectral sequence, a tool to compute the cohomology of configuration spaces. But variants of this spectral sequence exist in the context of complements of arrangements or stratified spaces; see for example \cite{looijenga1993cohomology,bibby2016cohomology,dupont2016purity,tosteson2016lattice,petersen2017spectral}. An important point of this paper is that if, instead of the braid arrangement used in the original setting, we consider a slightly different arrangement (\S \ref{sec:spectral-new}), then we will be able to study $H^*(F(Y,n))$ and $H^*(F(Y-P,n))$ on an equal footing, and prove degeneration using a weight argument of Totaro \cite{totaro1996configuration}.

To obtain our key Lemmas \ref{lem:decomposition} and \ref{lem:compat-diff}, we use a presentation of a relevant differential graded algebra along the lines of Totaro. The surprising feature is that our key isomorphism $\Phi$ arises quite artificially from the explicit presentation, but the mere fact that a non-compact variety has vanishing top cohomology implies that $\Phi$ is compatible with the differential. 

The paper is organized as follows. In Section \ref{sec:prelim} we describe the relevant generalization of Totaro's spectral sequence (Proposition~\ref{prop:new}), and prove a degeneration statement (Lemma \ref{lem:degen}). In Section \ref{sec:isom} we explicitly construct the key isomorphism $\Phi$ and prove the key Lemmas \ref{lem:decomposition} and \ref{lem:compat-diff}. In Section \ref{sec:proof} we prove our results as consequences of these key lemmas.

\subsection*{Acknowledgements}
We thank Gilyoung Cheong, Jeffrey Lagarias, Mircea Musta\c{t}\u{a}, Roberto Pagaria, Dan Petersen, Andrew Snowden, John Stembridge, Burt Totaro, and Craig Westerland for helpful conversations and useful comments. We thank Christin Bibby and Philip Tosteson for informing the author about numerous crucial results and references. This work was done with the support of National Science Foundation grants DMS-1840234 and DMS-1701576.

\section*{Notation} For $n\geq 0$, let $[n]:=\set{1,2,\dots,n}$. For $i\in [n]$, $[n]-i:=\set{j\in [n]:j\neq i}$. In this paper, a variety always means a connected smooth complex quasi-projective variety. If $X$ is a variety and $r\geq 0$, then $X_r$ denotes $X-P^1-\dots-P^r$, the complement of $r$ distinct points $P^1,\dots,P^r$ on $X$.

\section{Preliminaries}\label{sec:prelim}
\subsection{Arrangements and Orlik--Solomon algebra}
We follow \cite{orlikterao1992}. A \emph{hyperplane arrangement} is a finite set $\cA$ of complex hyperplanes in $\C^n$. A \emph{stratum} $F$ of $\cA$ is the intersection of zero or more hyperplanes in $\cA$ (so $\C^n$ is also a stratum). The \emph{lattice} $L(\cA)$ of $\cA$ is the partially ordered set (poset) of strata of $\cA$, ordered by inclusion. It has a top element $\hat 1:=\C^n$. The \emph{rank} of an element $F\in L(\cA)$ is the length of any maximal chain from $F$ to $\hat 1$, or equivalently the complex codimension of the corresponding stratum in $\C^n$. A subset $S$ of $\cA$ is called (i) \emph{independent}, if $\bigcap S:=\bigcap_{Y\in S} Y\neq \varnothing$ and $\rk(\bigcap S)= \lvert S\rvert$; (ii) \emph{dependent}, if $\bigcap S\neq \varnothing$ and $\rk(\bigcap S) < \lvert S\rvert$; (iii) \emph{vanishing}, if $\bigcap S=\varnothing$. Note that these notions depend on $\cA$ through $L(\cA)$. 

A \emph{differential graded algebra} is a graded-commutative algebra equipped with a graded derivation. The (rational) \emph{Orlik--Solomon algebra} $A(\cA)$ is a differential graded algebra over $\Q$ given by degree-one generators $g_Y, Y\in \cA$ with relations
\begin{align}
    g_{Y_1}\cdots g_{Y_l}=0,& \text{ if $\set{Y_1,\dots,Y_l}$ is vanishing,} \label{eq:os-rel1}
    \\ \sum_{i=1}^l (-1)^{i-1} g_{Y_{1}} \dots \widehat{g}_{Y_{i}} \cdots g_{Y_{l}} = 0, &\text{ if $\set{Y_1,\dots,Y_l}$ is dependent,} \label{eq:os-rel2}
\end{align}
and derivation given by $\partial g_Y=1$. We refer to the relations \eqref{eq:os-rel1}--\eqref{eq:os-rel2} as the \emph{Orlik--Solomon relations}. Again, they depend only on $L(\cA)$.

To ease the notation, for an ordered subset $S=\set{Y_1,\dots,Y_l}$ of $\cA$, denote $g_S:=g_{Y_1}\cdots g_{Y_l}$. If $S$ is unordered, then $g_S$ is only well-defined up to a sign. The Orlik--Solomon relation can be written compactly as
\begin{equation}\label{eq:os-rel}
    g_S=0\text{ ($S$ vanishing), $\partial g_S = 0$ ($S$ dependent).}
\end{equation}

\subsection{Hyperplane-like arrangements}
A \emph{hyperplane-like arrangement} of codimension $d$ in a smooth variety $V$ is a collection $\cA$ of smooth $d$-codimensional closed subvarieties of $V$ such that
\begin{itemize}
    \item For any $S\subeq \cA$, the intersection $\bigcap_{Y\in S} Y$ (if nonempty) is smooth and connected. Call such an intersection a \emph{stratum} of $\cA$.
    \item Every stratum has codimension a multiple of $d$.
\end{itemize}

Again, let $L(\cA)$ be the poset of strata of $\cA$, called the \emph{lattice} of $\cA$. It is a top element $\hat 1=V$. Any maximal chain from $F\in L(\cA)$ to $\hat 1$ has the same length, called the \emph{rank} of $F$, and we have $\codim(F)=d\cdot \rk(F)$. One can similarly define the notion of independence, dependence, and vanishing of $S\subeq \cA$, the Orlik--Solomon relations on $\cA$, and the Orlik--Solomon algebra $A(\cA)$, all of which depend only on the poset $L(\cA)$.

Finally, let $M(\cA):=V-\bigcup_{Y\in \cA} Y$ denote the \emph{complement} of $\cA$.

\subsection{A spectral sequence}
Our main ingredient is a spectral sequence. Recall that for a smooth closed subvariety $Y$ of a smooth variety $V$ with $\codim_\C Y=d$, its \emph{class} $[Y]\in H^{2d}(X)$ is the image of $1\in H^0(X)$ under the Gysin map $H^0(X)\to H^{2d}(X)$.

\begin{theorem}[{Totaro \cite{totaro1996configuration}}]\label{thm:totaro}
    Let $\cA$ be a hyperplane-like arrangement of codimension $d$ on a variety $V$, and assume that for every stratum $F\in L(\cA)$, the pullback $H^*(V)\to H^*(F)$ is surjective. Then there is a spectral sequence
    \begin{equation}\label{eq:ss}
        E_1^{i,j}(\cA)\implies H^{i-j}(M(\cA);\Q),
    \end{equation}
    where $E_1(\cA)=\bigoplus_{i,j} E_1^{i,j}(\cA)$ is a bigraded algebra we now describe in terms of generators and relations. It is a quotient of $H^*(V)[g_Y: Y\in \cA]$, where $H^i(V)$ has bidegree $(i,0)$ and $g_Y$ has bidegree $(2d,1)$. The relations are the Orlik--Solomon relations \eqref{eq:os-rel}, and
    \begin{equation}\label{eq:rel-totaro}
        g_S \alpha = 0 \text{ if $S\subeq \cA$, $\alpha\in H^*(V)$, $\alpha|_{\cap S}=0\in H^*(\cap S)$.}
    \end{equation}

    The differential $d_1:E_1^{i,j}\to E_1^{i,j-1}$ makes $E_1^{i,j}$ a differential bigraded algebra and is given by $d_1 g_Y=[Y]\in H^{2d}(V)\subeq E_1^{2d,0}(\cA)$ and $d_1 \alpha = 0$ for $\alpha\in H^*(V)$.

    The spectral sequence is functorial and is compatible with mixed Hodge theory, where $g_Y$ is assigned the Hodge type $(d,d)$. 
\end{theorem}

\begin{remark}\label{rmk:leray}
    The spectral sequence \eqref{eq:ss} is the Leray spectral sequence of the arrangement $\cA$, but with a different bidegree convention. More precisely, our $E_1^{i,j}$ is suppported on $j\geq 0, i\geq 2dj$. The $k$-th differential is $d_k: E_k^{i,j}\to E_k^{i+1-k,j-k}$. On the other hand, the Leray spectral sequence
    \begin{equation}
        \mathbf{E}_2^{p,q}(\cA) \implies H^{p+q}(M(\cA);\Q)
    \end{equation}
    is supported on $p\geq 0$, $q\in (2d-1)\Z_{\geq 0}$. The $l$-th differential is $\mathbf{d}_l: \mathbf{E}_l^{p,q}\to \mathbf{E}_l^{p+l,q+1-l}$, so $\mathbf{d}_l$ vanishes unless $l=1+k(2d-1)$ for some $k\in \Z_{\geq 1}$; see \cite{totaro1996configuration}. Let $l=1+k(2d-1)$, $p=i-2dj$, and $q=(2d-1)j$, then these spectral sequences correspond by $E_k^{i,j}=\mathbf{E}_{l}^{p,q}$. Note that the bidegree of $\mathbf{d}_l$ checks up: $\mathbf{d}_l$ maps $\mathbf{E}_l^{p,q}$ into $\mathbf{E}_l^{p+l,q+1-l}=\mathbf{E}_l^{(i+1-k)-2d(j-k),(2d-1)(j-k)}=E_k^{i+1-k,j-k}$. The cohomological degree also checks up: $i-j=p+q$. 

    The difference in the bidegree conventions is not essential and is for the convenience that only the ``important'' pages of the Leray spectral sequences are recorded. 
\end{remark}

\begin{remark}
    Totaro's original statement was for the braid arrangement on $V=X^n$ with $\dim_\C X=d$, but the same proof works for any hyperplane-like arrangement; see \cite[p.~1062, Remark]{totaro1996configuration}. If $d=1$, analogous descriptions of \eqref{eq:ss} appear in Bibby \cite{bibby2016cohomology} and Dupont \cite[Thm.~3.1]{dupont2016purity}. Our spectral sequence can also be obtained \emph{without change of bidegree convention} as a special case of Tosteson's lattice spectral sequence \cite[Thm.~1.8]{tosteson2016lattice}, which reads
    \begin{equation}
    E_1^{i,j}(\cA) = \bigoplus_{F\in L(\cA)} \ttilde H_{j-2}((F,\hat 1); H^i(V, V-F))
    \end{equation}
    where $\ttilde H_{j-2}((F,\hat 1))$ denotes a reduced homology of the order complex of a poset interval (see \cite{wachs2007poset}). For hyperplane-like arrangements, we have $\ttilde H_{j-2}((F,\hat 1))=A_F(\cA):=\mathrm{span}\{g_S: \cap S=F\}\subeq A(\cA)$ (see \cite[Theorem 4.1]{folkman1966homology} and \cite[\S 4.5]{orlikterao1992}). At the same time, we have $H^i(V,V-F)\cong H^{i-2\codim_\C F}(F)$ from Thom isomorphism. Hence $E_1^{i,j}(\cA) = \bigoplus_{\rk F=j} H^{i-2dj}(F)\otimes A_F(\cA)$, or $E_1(\cA)=\bigoplus_{F} H^{*-2d\rk F}(F)\otimes A_F(\cA)$. Since $H^*(V)\to H^*(F)$ is surjective, we may rewrite $H^{*-2d\rk F}(F)$ as a quotient of $H^{*-2d\rk F}(V)$ by an ideal, which eventually leads to our presentation \eqref{eq:ss} using Bibby's argument. The mixed-Hodge compatibility is addressed by Petersen \cite{petersen2017spectral} via a Poincar\'e dual.
\end{remark}

\subsection{Degeneration}
We say a spectral sequence degenerates at $E_2$ if the differentials $d_k$ vanishes for all $k\geq 2$, which ensures $E_\infty=E_2$. We will see this is why we need a purity condition (see Definition~\ref{def:purity}). The lemma below extends the argument of Totaro \cite[\S 4]{totaro1996configuration} for $\lambda=1$ and Dupont \cite[Thm.~3.3, Proof]{dupont2016purity} for $\lambda=2$. 

\begin{definition}
    We say a rational number $\lambda$ is \emph{good} with respect to $d$ if
    \begin{equation}\label{eq:good}
        \frac{\lambda}{2d+\lambda-2d\lambda} \notin \set{2,3,\dots}.
    \end{equation} 
\end{definition}

\begin{lemma}\label{lem:degen}
    Suppose $V$ is pure of good slope $\lambda$ with respect to $d$, and $\cA$ is a hyperplane-like arrangement of codimension $d$ on $V$ satisfying the assumption of Theorem \ref{thm:totaro}. Then the spectral sequence \eqref{eq:ss} degenerates at $E_2$.
\end{lemma}
\begin{proof}
    Recall from Theorem \ref{thm:totaro} that $H^i(V)$ has bidegree $(i,0)$ and weight $\lambda i$, and $g_Y$ has bidegree $(2d,1)$ and weight $2d$. As a result, for $j\geq 0$ and $i\geq 2dj$, by $(i,j)=(i-2dj,0)+j(2d,1)$, $E_1^{i,j}$ is pure of weight $\lambda(i-2dj)+j(2d)=\lambda i - (\lambda-1) 2dj$. For $k\geq 2$, since $E_k^{i,j}$ is a subquotient of $E_1^{i,j}$, it is pure of the same weight.

    Recall from Remark~\ref{rmk:leray} that the $k$-th differential is $d_k: E_k^{i,j}\to E_k^{i+1-k,j-k}$. Fix $k\geq 2$. If we can show that $E_k^{i,j}$ and $E_k^{i+1-k,j-k}$ has different weights, then we must have $d_k=0$, as required. 

    Suppose this is not the case, then we have an equality of weights
    \begin{equation}
        \lambda i - (\lambda-1) 2dj = \lambda (i+1-k) - (\lambda-1) 2d(j-k),
    \end{equation}
    so that 
    \begin{equation}
        \lambda (1-k) + (\lambda-1)2dk=0,\; k=\frac{\lambda}{2d+\lambda-2d\lambda}.
    \end{equation}
    By the definition of good slope, this contradicts with the fact $k\in \set{2,3,\dots}$.
\end{proof}

Our need of the technical notion of good slope is only temporary:

\begin{lemma}\label{lem:good}
    If $X$ is a smooth variety of dimension $d$ that is pure of some slope, then for any $n$, $V:=X^n$ is pure of good slope with respect to $d$.
\end{lemma}
\begin{proof}
    If $X$ is pure of slope $1$, then so is $V$ by K\"unneth's formula. By \eqref{eq:good}, $1$ is always a good slope.

    From now on assume otherwise, so $X$ is non-compact, $H^{\geq 1}(X)\neq 0$ and $X$ is pure of a unique slope $\lambda>1$. (Note that if $H^{\geq 1}(X)=0$ then $X$ is pure of any slope.) Then by K\"unneth $V$ is pure of slope $\lambda$. It remains to show $\lambda$ is good.
    
    Let $i>0$ be such that $H^i(X)\neq 0$. Then $\lambda\cdot i\in \Z$. Since $X$ is non-compact, $H^{2d}(X)=0$, so $i\leq 2d-1$. It follows that
    \begin{equation}
        \lambda \geq 1+\frac{1}{i}\geq \frac{2d}{2d-1}.
    \end{equation}
    But from \eqref{eq:good}, all bad slopes with respect to $d$ are of the form $\dfrac{2dk}{(2d-1)k+1}$, where $k\in \Z_{\geq 2}$. Since $\dfrac{2dk}{(2d-1)k+1} < \dfrac{2d}{2d-1}$, the slope $\lambda$ is good.
\end{proof}

\subsection{Braid-puncture arrangements}\label{sec:spectral-new}
We will consider a slight generalization of the braid arrangement. Consider a connected smooth complex variety $X$ of dimension $d$ and distinct points $P^1,\dots,P^r \;(r\geq 0)$ of $X$. Fix $n\geq 0$, and consider the arrangement $\cA$ of $V:=X^n$ consisting of the following $d$-codimensional closed subvarieties:
\begin{align}
\Delta_{ij}&:=\{(x_1,\dots,x_n)\in X^n: x_i=x_j\},\\
\Delta_i^s&:=\{(x_1,\dots,x_n)\in X^n: x_i=P^s\}
\end{align}
for $\set{i,j}\subeq [n]$ and $1\leq s\leq r$. When $r=0$, $\cA$ is just the braid arrangement. In general, we call $\cA$ the \emph{braid-puncture arrangement} of $X$ with punctures $P^1,\dots,P^r$. Obviously, the complement $M(\cA)$ is the ordered configuration space $F(X_r,n)$, where $X_r=X-P^1-\dots-P^r$.

Clearly $\cA$ is a hyperplane-like arrangement and $L(\cA)$ is isomorphic to the poset of strata of a hyperplane arrangement of $\C^n$ consisting of big diagonals and the ``horizontal or vertical hyperplanes'' $x_i=s$. Moreover, it is clear from a description of strata of $\cA$ that $H^*(X^n)\to H^*(F)$ is surjective for $F\in L(\cA)$ (see Appendix). Recall that the braid arrangement has no vanishing subset and a typical example of dependent subset is, say, $\set{\Delta_{12},\Delta_{23},\Delta_{31}}$, which comes from the fact that $x_1=x_2$ and $x_2=x_3$ imply $x_3=x_1$. For braid-puncture arrangements, however, we have two more sources of dependent and vanishing subsets:
\begin{itemize}
    \item $\set{\Delta_i^s, \Delta_i^t}$ is vanishing if $s\neq t$, since we can't have both $x_i=P^s$ and $x_i=P^t$.
    \item $\set{\Delta_{ij},\Delta_i^s, \Delta_j^s}$ is dependent, since $x_i=P^s$ and $x_i=x_j$ imply $x_j=P^s$.
\end{itemize}

Recall K\"unneth's formula $H^*(X^n)=H^*(X)^{\otimes n}$. More explicitly, let $p_i:X^n\to X$ be the projection onto the $i$-th coordinate, and use the notation below for the rest of the paper:
\begin{equation}
    \alpha_i:=p_i^*(\alpha)\in H^*(X^n) \text{ for }\alpha\in H^*(X).
\end{equation}
Then $H^*(X^n)$ is generated as an algebra by $\alpha_i, \alpha\in H^*(X^n), i\in [n]$. 

We note two more relations in $E_1(\cA)$ from \eqref{eq:rel-totaro}:
\begin{align}
    g_{\Delta_{ij}}(\alpha_i-\alpha_j)&=0,\\
    g_{\Delta_i^s} \alpha_i&=0\text{ for }\alpha\in H^{\geq 1}(X):=\bigoplus_{p\geq 1} H^p(X).
\end{align}

Indeed, we have $(\alpha_i-\alpha_j)|_{\Delta_{ij}}=0$ since $p_i=p_j$ on $\Delta_{ij}$, and we have $\alpha_i|_{\Delta_i^s}=0$ for $\alpha\in H^{\geq 1}(X)$ since $p_i$ is a constant map on $\Delta_i^s$.

It will turn out that the relations from the sources above generate all relations we need (see Appendix). 

Finally, to describe the differential $d_1$, we need to understand $[Y]$ for $Y\in \cA$. For $i\neq j$, let $p_{ij}: X^n\to X^2$ be the projection $(x_1,\dots,x_n)\mapsto (x_i,x_j)$. Let $\Delta$ be the diagonal of $X^2$. We have
\begin{equation}\label{eq:fund-class}
    [\Delta_{ij}]=p_{ij}^*[\Delta],\; [\Delta_i^s]=p_i^*[P^s].
\end{equation}
If furthermore $X$ is non-compact, then $H^{2d}(X)=0$, so $[P^s]=0$ and thus $[\Delta_i^s]=0$. 

To summarize, let $(E_1(X_r,n), \dee)$ denote the differential bigraded algebra $(E_1(\cA), d_1)$ in Theorem~\ref{thm:totaro} for the braid-puncture arrangement $\cA$ above, and denote $g_{ij}:=g_{\Delta_{ij}}$ and $g_i^s:=g_{\Delta_i^s}$. Then we get the following description.

\begin{proposition}\label{prop:new}
The graded commutative algebra $E_1(X_r,n)$ is given by 
\begin{equation}
    E_1(X_r,n):=\frac{H^*(X^n)[g_{ij},g_i^s:i\neq j\in [n], 1\leq s\leq r]}{(\mathrm{relations})}
\end{equation}
with $H^p(X^n)$ having bidegree $(p,0)$, and $g_{ij}$ and $g_i^s$ having bidegree $(2d,1)$ and Hodge type $(d,d)$, subject to relations
\begin{align}
g_{ij}&=g_{ji},\label{relation1}\\
g_{ij}g_{jk}+g_{jk}g_{ki}+g_{ki}g_{ij}&=0 \text{ for }i,j,k\text{ distinct},\label{relation2}\\
g_{ij}\alpha_i&=g_{ij}\alpha_j\text{ for }\alpha\in H^*(X),\label{relation3}\\
g_i^s \alpha_i&=0\text{ for }\alpha\in H^{\geq 1}(X),\label{relation4}\\
g_i^s g_j^s - g_{ij}g_i^s + g_{ij}g_j^s&=0,\label{relation5}\\
g_i^s g_i^t &= 0\text{ for }s\neq t.\label{relation6}
\end{align}

The differential $\dee$ on $E_1(X_r,n)$ is determined by 
\begin{align}
\dee g_{ij}&=p_{ij}^*[\Delta],\\
\dee g_{i}^s&=[P^s]_i,\\
\dee|_{H^*(X^n)}&=0
\end{align}

The symmetric group $S_n$ acts on $E_1(X_r,n)$ by permuting subscripts.
\end{proposition}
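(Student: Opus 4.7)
The plan is to unpack Proposition~\ref{prop:tosteson} applied to the arrangement $\cA = \{\Delta_{ij}\} \cup \{\Delta_i^s\}$ in $X^n$ and match its output with the stated presentation. First I would verify that $\cA$ satisfies conditions (a)--(d) of Section~\ref{sec:spectral-new}: the strata of $\cA$ are classified by pairs $(\pi, \ell)$ where $\pi$ is a set partition of $\{1,\dots,n\}$ and $\ell$ is an injective partial labeling of blocks of $\pi$ by $\{1,\dots,r\}$, and such a stratum is canonically isomorphic to $X^m$ with $m$ the number of unlabeled blocks, of codimension $d(n-m)$ and rank $n-m$. The lattice is isomorphic to that of the hyperplane arrangement in $\C^n$ cut out by $\{x_i = x_j\} \cup \{x_i = c_s\}$ for generic constants $c_s$. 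Thus Proposition~\ref{prop:tosteson} gives $E_1(X_r,n) = \bigoplus_F H^*(F;\Q) \otimes A_F(\cA) \otimes \Q$ with the asserted bidegrees and Hodge types, and the stated generators $g_{ij}, g_i^s$ correspond to the atomic Orlik--Solomon elements supported on $\Delta_{ij}, \Delta_i^s$.

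Next I would verify the relations. The Orlik--Solomon ideal is generated by the relations for minimal dependent and vanishing subsets of $\cA$; in our case these are (i) the ``triangles'' $\{\Delta_{ij}, \Delta_{jk}, \Delta_{ik}\}$ yielding \eqref{relation2}; (ii) the ``mixed triangles'' $\{\Delta_i^s, \Delta_j^s, \Delta_{ij}\}$ yielding \eqref{relation5}; and (iii) the incompatible pairs $\{\Delta_i^s, \Delta_i^t\}$ with $s \neq t$ yielding \eqref{relation6}; relation \eqref{relation1} is just $\Delta_{ij} = \Delta_{ji}$. Larger minimal vanishing sets such as $\{\Delta_i^s, \Delta_{ij}, \Delta_j^t\}$ also exist, but their defining relations $g_i^s g_{ij} g_j^t = 0$ follow algebraically from \eqref{relation5} and \eqref{relation6} by substitution; an easy induction handles chains of arbitrary length. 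Relations \eqref{relation3}, \eqref{relation4} encode the $H^*(X^n)$-module structure on $E_1$: the multiplicative rule $\alpha \cdot g_S = (\alpha|_F) \otimes g_S$ (for $F = \bigcap S$), inherited from the cup product via the construction of Totaro~\cite{totaro1996configuration}, implies \eqref{relation3} because $\alpha_i$ and $\alpha_j$ pull back to the same class on $\Delta_{ij}\cong X^{n-1}$, and implies \eqref{relation4} because $\alpha_i$ restricted to $\Delta_i^s$ factors through the constant map to $P^s \in X$, which kills $H^{\geq 1}(X)$.

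To finish, I must show the presentation is complete and identify the differential. Let $B$ be the presented algebra; by the above, all relations hold in $E_1(X_r,n)$, so we have a natural surjection $\phi: B \onto E_1(X_r,n)$ (surjective because $E_1$ is generated in bidegrees $(*, 0)$ and $(2d, 1)$). To show $\phi$ is an isomorphism, I would exhibit a normal form writing each element of $B$ as a $\Q$-linear combination of products $\alpha \cdot g_T$, where $g_T$ ranges over a no-broken-circuit (NBC) basis of $A(\cA)$ and $\alpha$ over a basis of $H^*(F)$ pulled back from chosen block-representative coordinates in $H^*(X^n)$, for $F = \bigcap T$. A direct dimension count per bidegree then matches the stratification decomposition from Proposition~\ref{prop:tosteson}, forcing $\phi$ to be an isomorphism. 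I expect this step to be the main obstacle, as it requires carefully combining NBC rewriting in the Orlik--Solomon algebra with the coordinate-identification relations \eqref{relation3} and \eqref{relation4} to guarantee that the normal form is well-defined.

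For the differential, $d_1$ is the Gysin pushforward paired with the Orlik--Solomon derivation $\partial$. For each atom $Y \in \cA$, the generator $g_Y$ is the class of $1 \otimes e_Y \in H^0(Y) \otimes A_Y(\cA)$, and $d_1 g_Y$ is the Gysin image of $1$, namely the cycle class $[Y] \in H^{2d}(X^n;\Q)$. The identities $p_{ij}^*[\Delta] = [\Delta_{ij}]$ and $p_i^*[P^s] = [\Delta_i^s]$ (from the flat-pullback property of cycle classes) then give the asserted formulas for $dg_{ij}$ and $dg_i^s$, while $d|_{H^*(X^n)} = 0$ is forced by bidegree since $H^*(X^n)$ sits in the row $j = 0$. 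Finally, the $S_n$-equivariance follows from the functoriality clause of Proposition~\ref{prop:tosteson} applied to coordinate permutations of $X^n$, which preserve $\cA$ by permuting the subscripts of its atoms.
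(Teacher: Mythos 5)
Your proposal is correct in outline but takes a genuinely different route to completeness of the presentation than the paper does. The paper never sets up a surjection-plus-dimension-count: instead it first proves (Lemma \ref{lem:e1algebraquotient}) that under the surjectivity of $H^*(X^n)\to H^*(F)$ one has directly $E_1(\cA)\otimes\Q\cong H^*(X^n)\otimes A(\cA)\big/\bigl(\sum_F I_F\cdot A_F(\cA)\bigr)$, so the whole problem reduces to two ideal-generation statements: (i) the ideal $\sum_F I_F\cdot A_F(\cA)$ is already generated by the rank-one contributions \eqref{relation3}--\eqref{relation4}, which the paper proves by classifying independent sets $S$ with $\bigcap S=F$ as spanning trees/rooted forests and telescoping along paths; and (ii) the relations \eqref{relation2}, \eqref{relation5}, \eqref{relation6} generate the Orlik--Solomon ideal, which is done by induction on chain length using the technical Lemma \ref{lem:chainreduction} on $\partial e_{S_1\sqcup S_2}$. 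Step (i) in turn rests on Lemma \ref{lem:surjective}, whose nontrivial half is the explicit description of $\ker\bigl(H^*(X^n)\to H^*(F)\bigr)$, proved via a commutative-algebra computation of $\ker(A^{\otimes n}\to A)$. Your NBC normal-form route would also work and has the advantage of sidestepping the explicit kernel computation (the dimension count absorbs it), but note that the step you defer as ``the main obstacle'' --- showing the normal form spans the presented algebra --- is exactly equivalent to the two ideal-generation statements above, i.e.\ to the paper's tree/forest argument and its Lemma \ref{lem:chainreduction} induction; your remark that the relations for longer chains ``follow by an easy induction'' is the right idea but is where the actual work lives, so in a complete write-up you would still have to supply the substance of that lemma. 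Your treatment of the differential and of $S_n$-equivariance matches the paper's.
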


Again, the technical point that \eqref{relation1}--\eqref{relation6} generate all the relations in Theorem~\ref{thm:totaro} will be addressed in the Appendix. 

\begin{remark}
    Note that the ordered configuration space $F(X_r,n)$ can be treated in at least two ways: the complement of the braid-puncture arrangement on $X$, and the complement of the braid arrangement of $X_r$. A key point of our method is to use the spectral sequence from the former interpretation, instead of the latter. It will turn out that we gain two major advantages at the price of a little more combinatorics: (i) it is easier to compare different $r$'s, and (ii) we can now take advantage of mixed Hodge purity of $X$.
\end{remark}

\section{An explicit isomorphism}\label{sec:isom}
In this Section, we will construct an explicit isomorphism \eqref{eq:key-isom} that is compatible with the $(i,j)$ bigrading, the Hodge type bigrading, and the $S_n$ action. The observation of this isomorphism is our first new ingredient. Our isomorphism will relate $E_1(X_r,n)$ to $E_1(X_{r-1},n)$ and $E_1(X_r,n-1)$. We obtain the isomorphism by manipulating the presentation of $E_1(X_r,n)$ in Proposition~\ref{prop:new}.

\subsection{Technical notes}
As usual, for a graded-commutative ring $R$, $R\pairing{x_1,\dots,x_m}$ denotes the free left $R$-module with basis $x_1,\dots,x_m$, and if $x_1,\dots,x_m$ are understood as indeterminates homogeneous of certain degrees, $R[x_1,\dots,x_m]$ denotes the free graded-commutative algebra over $R$ generated by $x_1,\dots,x_m$. More precisely, $R[x_1,\dots,x_m]$ is a free left $R$-module with basis $x_1^{n_1}\cdots x_m^{n_m}$, $n_i\geq 0$, and the multiplication rule is given by the graded commutativity according to the parities of $\deg(x_i)$. 

We will repetitively use the following fact. If for all $i<j\in [m]$, and for all $i=j\in [m]$ such that $\deg(x_i)$ is even, we have a left $R$-linear combination $L_{ij}=L_{ij}(x_1,\dots,x_m)$ of $x_1,\dots,x_m$, then as left $R$-modules:
\begin{equation}\label{eq:quadratic}
    \frac{R[x_1,\dots,x_m]}{(x_ix_j-L_{ij})_{i,j\in [m]}} \cong R\pairing{1,x_1,\dots,x_m},
\end{equation} 
and the multiplication rule on $R\pairing{1,x_1,\dots,x_m}$ can be read from $x_ix_j=L_{ij}$. 

The following instance of \eqref{eq:quadratic} will be relevant. Let $A$ be a basis of $H^{\geq 1}(X)$ consisting of homogeneous elements. Since $H^*(X)=\Q \oplus H^{\geq 1}(X)$ (as $X$ is connected), we have $H^*(X)=\Q\langle 1, \alpha: \alpha\in A\rangle$. Let $L^\gamma_{\alpha\beta}\in \Q$ be the structure constants for the multiplication in $H^*(X)$:
\begin{equation}\label{eq:cup-struct-const}
    \alpha\beta = \sum_{\gamma\in A} L^\gamma_{\alpha\beta} \gamma.
\end{equation}
Then $H^*(X)=\Q[\alpha:\alpha\in A]/\eqref{eq:cup-struct-const}$ as algebras. By K\"unneth's formula, $H^*(X^n)=\Q[\alpha_i:i\in [n], \alpha\in A]/(\text{relations})$ with relations
\begin{equation}
    \alpha_i\beta_i = \sum_{\gamma\in A} L^\gamma_{\alpha\beta} \gamma_i.
\end{equation}

For any finite set $I$, we treat $E_1(X_r,I)$ as an isomorphic copy of $E_1(X_r,\abs{I})$ but with subscripts $i,j$ of the generators $g_{ij},g_i^s$ taken from $I$. 

\subsection{Main derivation}
\subsubsection*{Step 1} We first present $E_1(X_r,n)$ over $E_1(X_{r-1},n)$. 

\

Applying Proposition~\ref{prop:new} to both $r$ and $r-1$, and separating the old and new generators and relations, we get
\begin{equation}
E_1(X_r,n)=E_1(X_{r-1},n)[g_1^r,\dots,g_n^r]/(\text{new relations}),
\end{equation}
with new relations
\begin{align}
g_i^r g_j^r &= g_{ij} g_i^r - g_{ij} g_j^r \text{ for }i\neq j\in [n], \label{newrelation1}\\
g_i^s g_i^r &= 0\text{ for }1\leq s\leq r-1, \label{newrelation2}\\
g_i^r \alpha_i &= 0\text{ for }\alpha\in H^{\geq 1}(X). \label{newrelation3}
\end{align}

By \eqref{eq:quadratic}, 
\begin{equation}
    \frac{E_1(X_{r-1},n)[g_i^r: i\in [n]]}{\eqref{newrelation1}} \cong E_1(X_{r-1},n)\langle 1,g_i^r: i\in [n]\rangle
\end{equation}
as left $E_1(X_{r-1},n)$-modules, with multiplication determined by \eqref{newrelation1}. Noting that \eqref{newrelation2} and \eqref{newrelation3} simply say that some basis vectors $g_i^r$ are annihilated by ``scalars'' $g_i^s, \alpha_i\in E_1(X_{r-1},n)$, we obtain
\begin{align}
    E_1(X_r,n)&=\frac{E_1(X_{r-1},n)[g_i^r: i\in [n]]/(\ref{newrelation1})}{(\eqref{newrelation2},\eqref{newrelation3})}\\
    &=\frac{E_1(X_{r-1},n)\langle 1,g_i^r: i\in [n]\rangle}{(g_i^s g_i^r, \alpha_i g_i^r: i\in [n], s\neq r,\alpha\in H^{\geq 1}(X))} \label{eq:presentationE1Xstar}\\
    &=E_1(X_{r-1},n)\oplus \bigoplus_{i=1}^n \frac{E_1(X_{r-1},n)} {(g_i^s, \alpha_i: s\neq r)}\langle g_i^r\rangle.\label{eq:decompositionE1Xstar}
\end{align}

\subsubsection*{Step 2} For each fixed $i\in [n]$, we present $E_1(X_{r-1},n)$ over $E_1(X_{r-1},[n]-i)$.

\

Comparing the presentations of $E_1(X_{r-1},n)$ and $E_1(X_{r-1},[n]-i)$ in Proposition \ref{prop:new}, and recalling \eqref{eq:cup-struct-const}, we get the following presentation, where $i$ is fixed, the subscripts $j,k$ range over $[n]-i$, the superscripts $s,t$ range over $[r-1]$, and $\alpha$ is taken from a basis $A$ of $H^{\geq 1}(X)$:
\begin{equation}\label{eq:presentationE1X}
E_1(X_{r-1},n)=\frac{E_1(X_{r-1},[n]-i)[g_{ij}, g_i^s,\alpha_i]}{\left(
\begin{array}{rcll}
g_{ij}g_{ik} &=& -g_{jk}g_{ij}+g_{jk}g_{ik}, &j\neq k\\
g_{ij}g_i^s &=& -g_j^s g_i^s - g_j^s g_{ij}, &j,s\\
g_i^s g_i^t &=& 0, &s\neq t\\
\alpha_i g_{ij} &=& \alpha_j g_{ij}, &j, \alpha\\
g_i^s \alpha_i &=& 0, &s, \alpha\\
\alpha_i \beta_i &=& \sum_{\gamma\in A} L^\gamma_{\alpha\beta} \gamma_i, &\alpha,\beta
\end{array}
\right)
}.
\end{equation}

As before, we will apply \eqref{eq:quadratic}, and get
\begin{equation}\label{eq:decompositionE1X}
    E_1(X_{r-1},n)=E_1(X_{r-1},[n]-i)\langle 1,g_{ij},g_i^s,\alpha_i\rangle,
\end{equation}
with multiplication given by \eqref{eq:presentationE1X}. 

\subsubsection*{Step 3}
We substitute \eqref{eq:decompositionE1X} into the $i$-th summand in \eqref{eq:decompositionE1Xstar}.

\

In the following computation, $i$ is fixed, while $j\in [n]-i$, $s,t\in [r-1]$, $\alpha,\beta \in H^{\geq 1}(X)$ vary. Note that the multiplication structure of $E_1(X_{r-1},n)$ plays a role in \eqref{eq:decompositionE1Xstar}, so when we substitute $E_1(X_{r-1},n)$ using \eqref{eq:decompositionE1X}, we must extract the multiplication from \eqref{eq:presentationE1X}.

\begin{align}
    \frac{E_1(X_{r-1},n)} {(g_i^s, \alpha_i)}
    &=\frac{E_1(X_{r-1},[n]-i)\langle 1,g_{ij},g_i^s,\alpha_i\rangle}{E_1(X_{r-1},[n]-i)(1,g_{ij},g_i^t,\beta_i) (g_i^s, \alpha_i)}\\
    &=\frac{E_1(X_{r-1},[n]-i)\langle 1,g_{ij},g_i^s,\alpha_i\rangle}{\left(
        \begin{array}{rrrr}
        g_i^s \beta_i, &g_i^s, &g_i^s g_{ij}, &g_i^s g_i^t\\
        \alpha_i \beta_i, &\alpha_i, &\alpha_i g_{ij}, &\alpha_i g_i^t
        \end{array}
        \right)} \\
    &\overset{\eqref{eq:presentationE1X}}{=}\frac{E_1(X_{r-1},[n]-i)\langle 1,g_{ij},g_i^s,\alpha_i\rangle}{\left(
        \begin{array}{rrrr}
        0, &g_i^s, &g_j^s g_i^s+g_j^s g_{ij}, &0\\
        \sum_\gamma L^\gamma_{\alpha\beta}\gamma_i, &\alpha_i, &\alpha_j g_{ij}, &0
        \end{array}
        \right)}.
\end{align}

Two of the relations remove the generators $g_{i}^s$ and $\alpha_i$, so we are left with
\begin{align}
    \frac{E_1(X_{r-1},n)} {(g_i^s, \alpha_i)} &= \frac{E_1(X_{r-1},[n]-i)\langle 1,g_{ij}\rangle}{(g_j^s g_{ij}, \alpha_j g_{ij})} \\
    &= E_1(X_{r-1},[n]-i) \oplus \bigoplus_{j\in [n]-i} \frac{E_1(X_{r-1},[n]-i)}{(g_j^s,\alpha_j)} \langle g_{ij} \rangle. \label{eq:presentation_summand}
\end{align}

\subsubsection*{Step 4} We observe a coincidence based on our computation.

\

We surprisingly note that \eqref{eq:presentation_summand} is isomorphic to \eqref{eq:decompositionE1Xstar} applied to the index set $[n]-i$, up to renaming the basis vectors in the summations. Namely, for each $i\in [n]$, we have an isomorphism
\begin{equation}
    E_1(X_r,[n]-i) \to \frac{E_1(X_{r-1},n)} {(g_i^s, \alpha_i)}
\end{equation} 
that sends $1$ to $1$, $g_j^r$ to $g_{ij}$, and preserves $E_1(X_{r-1},[n]-i)$. 

Applying this to each summand of the \emph{original} \eqref{eq:decompositionE1Xstar}, we obtain an isomorphism summarized in the lemma below.

\begin{lemma}\label{lem:decomposition}
    There exists a $\Q$-linear isomorphism
    \begin{equation}\label{eq:key-isom}
        \Phi: E_1(X_{r-1},n)\oplus \bigoplus_{i=1}^n E_1(X_{r},[n]-i) \to E_1(X_r,n),
    \end{equation}
    such that $\Phi|_{E_1(X_{r-1},n)}$ is the natural map to $E_1(X_r,n)$, and $\Phi|_{E_1(X_r,[n]-i)}$ is the $E_1(X_{r-1},[n]-i)$-linear map that sends $1$ to $g_i^r$ and sends $g_j^r$ to $g_{ij}g_i^r$ for all $j\in [n]-i$. 
\end{lemma}

Note that $\Phi$ is uniquely determined by the properties stated in Lemma \ref{lem:decomposition}, since $1$ and $g_j^r$, $j\in [n]-i$ generate $E_1(X_{r},[n]-i)$ as an $E_1(X_{r-1},[n]-i)$-module. The content of Lemma \ref{lem:decomposition} is that $\Phi$ is well-defined and is an isomorphism.

\subsection{Compatibility with differential}
Lemma \ref{lem:decomposition} would be useless if $\Phi$ were not compatible with the differential $\mathrm{d}=d_1$. However, we have the following.

\begin{lemma}\label{lem:compat-diff}
    Suppose $X$ is non-compact, then $\Phi$ in \eqref{eq:key-isom} commutes with $\mathrm{d}$.
\end{lemma}
\begin{proof}
    Recall the differential rule from Proposition~\ref{prop:new}: $\mathrm{d}g_{jk}=p_{jk}^*[\Delta]$, $\mathrm{d}g_{j}^s=[P^s]_i$, and $\mathrm{d}\alpha_j=0$ for all $j,k\in [n], s\in [r]$. Moreover, if $X$ is non-compact, then we recall from the discussion after \eqref{eq:fund-class} that $[P^s]=0$, so 
    \begin{equation}\label{eq:diff-vanish}
        \mathrm{d}g_{j}^s=0,\; j\in [n],\; s\in [r]
    \end{equation}

    To verify that $\Phi$ commutes with $\mathrm{d}$, it suffices to check that $\mathrm{d}1=\mathrm{d}g_i^r$ and $\mathrm{d}g_j^r=\mathrm{d}(g_{ij}g_i^r)$. By \eqref{eq:diff-vanish}, it only remains to check that $\mathrm{d}(g_{ij}g_i^r)=0$. Consider
    \begin{equation}
        \mathrm{d}(g_{ij}g_i^r)=\mathrm{d}(g_{ij}) g_i^r-g_{ij}\mathrm{d}g_i^r=\mathrm{d}(g_{ij})g_i^r=p_{ij}^*[\Delta] g_i^r.
    \end{equation}

    By K\"unneth's formula, $[\Delta]\in H^{2d}(X^2)=\bigoplus_{p=0}^{2d} H^p(X)\otimes H^{2d-p}(X)$. Since $X$ is non-compact, $H^{2d}(X)=0$, so $\bigoplus_{p=1}^{2d-1}$ suffices. In particular, $p_{ij}^*[\Delta]$ can be expressed as a $\Q$-linear combination of terms of the form $\beta_j\alpha_i$, where $\alpha,\beta\in H^{\geq 1}(X)$. 

    But relation \eqref{relation4} implies $\alpha_i g_i^r=0$ for all $\alpha\in H^{\geq 1}(X)$. This implies $\mathrm{d}(g_{ij}g_i^r)=0$, as required.
\end{proof}

We remark that Lemma \ref{lem:compat-diff} is the only reason why we need $X$ to be non-compact.

\section{Proof of our results}\label{sec:proof}

In this Section, we finish the proofs of our results.

\subsection{Proof of Theorem \ref{thm:C}}
\begin{proof}
    Let $X$ be a non-compact pure variety of dimension $d$ such that $Y=X_{r-1}=X-P^1-\dots-P^{r-1}$. Let $P=P^s$, so that $Y-P=X_r$. By our spectral sequences, we have a non-canonical isomorphism of $S_n$-representations
    \begin{equation}
        H^k(F(X_r,n))\cong \bigoplus_{i-j=k} E_\infty^{i,j}(X_r,n). 
    \end{equation} 

    Consider $\Phi$ in \eqref{eq:key-isom}. By Lemma \ref{lem:compat-diff}, $\Phi$ descends to an isomorphism
    \begin{equation}
        E_2(X_{r-1},n)\oplus \bigoplus_{i=1}^n E_2(X_{r},[n]-i) \to E_2(X_r,n).
    \end{equation}
   The direct sum can be recognized as $\Ind_{S_{n-1}}^{S_n} E_2(X_r,n-1)$. By Lemmas \ref{lem:degen} and \ref{lem:good}, $E_2=E_\infty$. Thus we get
    \begin{equation}
        E_\infty(X_{r-1},n)\oplus \Ind_{S_{n-1}}^{S_n} E_\infty(X_{r},n-1) \cong E_\infty(X_r,n).
    \end{equation}
    
    Keeping track of how $\Phi$ is defined on the generators (see Lemma \ref{lem:decomposition}) and the fact that $g_{ij}, g_i^s$ have bidegree $(2d,1)$ and Hodge type $(d,d)$, we have an $S_n$-equivariant isomorphism for each $i,j,p,q$:
    \begin{equation}
        H^{p,q}(E_\infty^{i,j}(X_{r-1},n))\oplus \Ind_{S_{n-1}}^{S_n} H^{p-d,q-d}(E_\infty^{i-2d,j-1}(X_r,n-1)) \to H^{p,q}(E_\infty^{i,j}(X_r,n)).
    \end{equation}

    For each $p,q,k$, by summing over all $i,j$ with $i-j=k$, we get
    \begin{equation}
        H^{p,q;k}(F(X_{r-1},n)) \oplus \Ind_{S_{n-1}}^{S_n} H^{p-d,q-d;k-(2d-1)}(F(X_r,n-1)) \cong H^{p,q;k}(F(X_r,n)).
    \end{equation}
    This can be equivalently restated as
    \begin{equation}\label{eq:final}
        \gr H^{k}(F(X_{r-1},n)) \oplus \Ind_{S_{n-1}}^{S_n} \gr H^{k-(2d-1)}(F(X_r,n-1))(-d) \cong \gr H^{k}(F(X_r,n)).
    \end{equation}
    For the term with $F(X_r,n-1)$, we may decompose it using \eqref{eq:final} again, getting a term with $F(X_{r-1},n-1)$ and a term with $F(X_r,n-2)$. Continuing the process with induction, \eqref{eq:equivariant_mhs} follows as desired.
\end{proof}

\subsection{Proof of Theorem \ref{thm:A}}

\begin{proof}
    It suffices to take the $S_n$-invariants of \eqref{eq:equivariant_mhs} and note by Frobenius reciprocity that $(\Ind_{S_{n-k}}^{S_n} M)^{S_n}=M^{S_{n-k}}$ for any $S_{n-k}$-representation $M$. 
\end{proof}

\subsection{Proof of Corollary \ref{thm:D}}

\begin{proof}
    We first work out how character polynomials and Frobenius characteristics interact with induction. Let $1_k$ denote the trivial subgroup of $S_k$. Suppose $M=\{M_n\}_{n\geq 0}$ is a sequence of $S_n$-representations. Let $I_k M$ be the sequence $\{(I_k M)_n\}_{n\geq 0}$ such that
    \begin{equation}
        (I_k M)_n:=\begin{cases}
            \Ind_{S_{n-k}}^{S_n}M_{n-k}:=\Ind_{S_{n-k}\times 1_k}^{S_n} (M_{n-k}\boxtimes \Q),& n\geq k; \\
            0,& 0\leq n<k.
        \end{cases}
    \end{equation}
    Clearly $I_{k+l}M\cong I_k(I_l M)$. 

    Suppose $M$ admits a character polynomial $P(x_1,x_2,\dots)\in \Q[x_1,x_2,\dots]$, namely, 
    \begin{equation}
        \chi_{M_n}(\sigma)=P(x_1(\sigma),\dots) \text{ for each $n\geq 0$ and $\sigma\in S_n$,}
    \end{equation}
    see \eqref{eq:charpoly}. We now claim $I_k M$ admits a character polynomial
    \begin{equation}\label{eq:induct-char-poly}
        I_k P(x_1,\dots):=x_1(x_1-1)\cdots (x_1-k+1) P(x_1-k,x_2,x_3,\dots).
    \end{equation}
    
    Since $I_k$ is just $I_1$ applied $k$ times, it suffices to prove this for $k=1$. Recall that for any class function $f$ on $S_{n-1}$, the induced class function is given by
    \begin{equation}
        \left(\Ind_{S_{n-1}\times 1}^{S_n}f\right)(\sigma) = \sum_{i:\sigma(i)=i} f(\sigma|_{[n]-i}).
    \end{equation}
    If $\sigma(i)=i$, $\sigma|_{[n]-i}$ has all cycles of $\sigma$ except for a $1$-cycle (the fixed point $i$). Thus
    \begin{align}
        \chi_{(I_1 M)_n}(\sigma)&=\parens*{\Ind_{S_{n-1}\times 1}^{S_n} \chi_{M_{n-1}}}(\sigma)\\
        &= \sum_{i:\sigma(i)=i} \chi_{M_{n-1}}(\sigma|_{[n]-i}) \\
        &= \sum_{i:\sigma(i)=i} P(x_1-1,x_2,x_3,\dots)(\sigma)\\
        &= (x_1 P(x_1-1,x_2,\dots))(\sigma)
    \end{align}
    for all $n\geq 1$ and $\sigma\in S_n$. If $n=0$, both sides are 0. This concludes \eqref{eq:induct-char-poly}. 

    Similarly, let $\lambda$ be a partition and $M=M(V_\lambda)$ as in \cite[p.~1848]{cef2015fi}. By [ibid, (4)], 
    \begin{equation}
        M_n=\Ind_{S_{n-\abs{\lambda}}\times S_{\abs{\lambda}}}^{S_n} (\Q \boxtimes V_{\lambda}),
    \end{equation}
    where $\Q$ denotes the trivial representation. It follows that
    \begin{align}
        (I_k M(V_\lambda))_n &= \Ind_{S_{n-k}\times 1_k}^{S_n} M(V_\lambda)_{n-k} \\
        &= \Ind_{S_{n-k}\times 1_k}^{S_n} \Ind_{S_{n-k-\abs{\lambda}}\times S_{\abs{\lambda}}\times 1_k}^{S_{n-k}\times 1_k} (\Q \boxtimes V_{\lambda} \boxtimes \Q) \\
        &=\Ind_{S_{n-k-\abs{\lambda}}\times S_{\abs{\lambda}}\times 1_k}^{S_n} (\Q \boxtimes V_{\lambda} \boxtimes \Q)\\
        &=\Ind_{S_{n-(k+\abs{\lambda})}\times S_{\abs{\lambda}+k}}^{S_n} \left( \Q \boxtimes \Ind_{S_{\abs{\lambda}}\times 1_k}^{S_{\abs{\lambda}+k}}(V_\lambda \boxtimes \Q) \right)\\
        &=(M(I_k V_{\lambda}))_n,
    \end{align}
    where $I_k V_\lambda$ denotes the $S_{\abs{\lambda}+k}$-representation $\Ind_{S_{\abs{\lambda}}\times 1_k}^{S_{\abs{\lambda}+k}}(V_\lambda \boxtimes \Q)$. Hence
    \begin{equation}
        I_kM(V_\lambda)=M(I_k V_\lambda).
    \end{equation}

    A standard exercise in the induction ring implies that the Frobenius characteristics of $I_k V_\lambda$ is $s_1^k s_\lambda$. If for any $M$ of the form $M=\oplus_{\lambda \in \mathcal P} c_\lambda M(V_\lambda)$, we define $\mathrm{ch}(M):=\sum c_\lambda s_\lambda$ (cf.~\eqref{eq:fisharp}), then the discussion above implies
    \begin{equation}\label{eq:induct-frob-char}
        \mathrm{ch}(I_kM) = s_1^k \mathrm{ch} (M).
    \end{equation}

    We are now ready to prove both assertions of Corollary \ref{thm:D}. From \eqref{eq:equivariant}, we have
    \begin{equation}
        H^i(F(Y-P,-))=\bigoplus_{k\geq 0} I_k H^{i-k(2d-1)}(F(Y,-)).
    \end{equation}
    Then \eqref{eq:charpoly} follows immediately from \eqref{eq:induct-char-poly}, and \eqref{eq:frobchar} follows from \eqref{eq:induct-frob-char}.
\end{proof}

\subsection{Proof of Corollary \ref{cor:generating}}

\begin{proof}
    Theorem \ref{thm:A} implies the identity on the generating function \eqref{eq:hx-def}:
    \begin{equation}
        h_{Y-P}(x,y,u;t):=\frac{1}{1+(xy)^d u^{2d-1}t} h_Y(x,y,u;t).
    \end{equation}
    We may repeat the process, since if $Y$ is a possibly punctured non-compact pure variety then so is $Y-P$. It follows that
    \begin{equation}\label{eq:more-punc}
        h_{Y_s}(x,y,u;t):=\frac{1}{(1+(xy)^d u^{2d-1}t)^s} h_Y(x,y,u;t),
    \end{equation}
    where $Y_s$ is $Y$ minus $s$ points. Let $Y=\Sigma_{g,1}$, so that $Y_{r-1}=\Sigma_{g,r}$. For a given $g$ and $r\geq 1$, by \eqref{eq:more-punc} with $d=1,s=r-1$, the truth of \eqref{eq:conj:generating} for $\Sigma_{g,1}$ is equivalent to the case for $\Sigma_{g,r}$.

    One may check that \eqref{eq:conj:generating} for $g=r=1$ is just a reformulation of \cite[Thm.~1.1]{cheonghuang2022betti}. This implies \eqref{eq:conj:generating} for $g=1, r\geq 1$ as claimed. 
\end{proof}

\section{Further directions}
Our theorems naturally motivate the following questions.

\begin{question}
    In Theorem \ref{thm:A} and/or Theorem \ref{thm:C}, how much can we relax the assumption that $Y$ is a possibly punctured non-compact pure variety? For example, do they hold for every non-compact variety $Y$? 
\end{question}

It is necessary that $Y$ is non-compact; recall that even \eqref{eq:kallel} does not hold for an elliptic curve $\Sigma_{1,0}$ by \cite[Tables~2~and~3]{napolitano2003cohomology}. From the perspective of our method, the need of non-compactness of $Y$ is manifest in the proof of Lemma \ref{lem:compat-diff}. On the other hand, we remark that the construction of our key isomorphism $\Phi$ does not require any mixed-Hodge-theoretic assumption on $Y$ (not even a complex structure on $Y$). Any other setting where the degeneration Lemma \ref{lem:degen} holds would produce a generalization of \eqref{eq:kallel}, Theorem \ref{thm:A}, and Theorem \ref{thm:C}.

\section{Appendix: the braid-puncture arrangement}

In this Appendix, we provide some standard combinatorial details about the braid-puncture arrangement, and prove that \S \ref{sec:spectral-new} has listed all relations.

Let $\cA$ be the braid-puncture arrangement in \S \ref{sec:spectral-new}. Strata of $\cA$ correspond bijectively to pairs $(\chi,\sim)$ of a \emph{coloring} function $\chi:\{1,\dots,n\}\to \{0,\dots,r\}$ and an equivalence relation $\sim$ on $\chi^{-1}(0)$, according to the rule
\begin{equation}
    \begin{aligned}
        F_{(\chi,\sim)} &= \{(x_1,\dots,x_n)\in X^n: \text{$x_i=P^{\chi(i)}$ if $\chi(i)\neq 0$,} \\
        &\text{and $x_i=x_j$ if $\chi(i)=\chi(j)=0$ and $i\sim j$. }\}
    \end{aligned}
\end{equation}

In other words, a coordinate that is colored $s$ ($1\leq s\leq r$) is required to take $P^s$ as value, and the coordinates colored $0$ (``uncolored'') have no such a requirement, but they must agree if related by $\sim$. 
 
The following describe the map $H^*(X^n)\to H^*(F)$. Recall the projections $p_i:X^n\to X$.

\begin{lemma}\label{lem:surjective}
Let $F=F_{(\chi,\sim)}$, then $H^*(X^n)\to H^*(F)$ is surjective with kernel generated by $p_i^*\alpha$ for $\alpha\in H^{\geq 1}(X), \chi(i)\neq 0$, and $p_i^*\alpha - p_j^*\alpha$ for $\alpha\in H^*(X), \chi(i)=\chi(j)=0, i\sim j$.  
\end{lemma}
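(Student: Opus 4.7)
The plan is to exploit the fact that the stratum $F=F_{(\chi,\sim)}$ is naturally isomorphic to a product $X^k$, where $k$ is the number of blocks of the equivalence relation ${\sim}$ on $\chi^{-1}(0)$, and then to run everything through the K\"unneth formula. First, I would fix a representative $i_\ell\in B_\ell$ for each block $B_\ell$, which gives an explicit isomorphism $F\cong X^k$ under which the inclusion $\iota\colon F\incl X^n$ becomes a generalized diagonal: for each position $1\le i\le n$, the composite $X^k\cong F\incl X^n\xrightarrow{p_i} X$ is either the constant map with value $P^{\chi(i)}$ (when $\chi(i)\neq 0$) or the projection onto the $\ell$-th factor of $X^k$ (when $\chi(i)=0$ and $i\in B_\ell$).

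Next I would use the K\"unneth isomorphism $H^*(X^n;\Q)\cong H^*(X;\Q)^{\otimes n}$ together with its naturality to describe the pullback $\iota^*$ factor by factor. On the K\"unneth factor at position $i$, $\iota^*$ is the pullback along the composition above; so it annihilates $H^{\geq 1}(X)$ whenever $\chi(i)\neq 0$, and sends $H^*(X)$ isomorphically onto the $\ell$-th K\"unneth factor of $H^*(X)^{\otimes k}\cong H^*(X^k)$ whenever $\chi(i)=0$ and $i\in B_\ell$. Surjectivity then follows immediately: to hit a pure tensor $\alpha^{(1)}\otimes\cdots\otimes\alpha^{(k)}\in H^*(X^k)$, take the preimage with $\alpha^{(\ell)}$ placed at position $i_\ell$ and $1$ at every other position.

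Let $I$ denote the ideal in the statement. The containment $I\subseteq\ker\iota^*$ is immediate from the factorwise description above. For the reverse inclusion, I plan a dimension-count argument. Using the first family of relations, any monomial $\prod_i \alpha^{(i)}_i$ can be reduced modulo $I$ so that $\alpha^{(i)}\in H^0(X)=\Q$ whenever $\chi(i)\neq 0$, which contributes only scalars at those positions. Using the second family together with the multiplicative structure at a fixed position (namely $\alpha_{i_\ell}\beta_{i_\ell}=(\alpha\beta)_{i_\ell}$), all contributions from the positions in a block $B_\ell$ collapse to a single class at the representative $i_\ell$. Therefore $H^*(X^n;\Q)/I$ is $\Q$-spanned by the monomials $\prod_{\ell=1}^k \beta^{(\ell)}_{i_\ell}$, so
\[\dim_\Q H^*(X^n;\Q)/I \;\le\; (\dim_\Q H^*(X;\Q))^k = \dim_\Q H^*(X^k;\Q).\]
Combined with the surjection $H^*(X^n;\Q)/I\twoheadrightarrow H^*(X^k;\Q)$ induced by $\iota^*$, both maps must be isomorphisms, and we conclude $\ker\iota^*=I$.

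The main point I expect to need to verify carefully is that the two reduction steps in the dimension count combine cleanly. Since the second family of generators of $I$ consists of sign-free differences $\alpha_i-\alpha_j$, the identification $\alpha_i\equiv\alpha_{i_\ell}\pmod I$ carries no sign, and the graded Koszul signs that appear when collecting factors at a common position $i_\ell$ cancel against the graded-commutativity of the product at $i_\ell$. Apart from this sign bookkeeping, the proof is a direct application of K\"unneth together with the explicit form of $\iota$.
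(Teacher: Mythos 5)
Your proof is correct, but it identifies the kernel by a different mechanism than the paper. The paper also reduces via K\"unneth to two model cases (a coordinate pinned to a point, and a small diagonal $X\incl X^m$), but for the diagonal case it proves a purely algebraic lemma --- the kernel of the multiplication map $\mu_m\colon A^{\otimes m}\to A$ of a graded-commutative $R$-algebra is generated by the differences $\theta_i(a)-\theta_j(a)$ --- by induction on $m$, with an explicit telescoping computation in the base case $m=2$. You instead bypass any direct identification of the kernel: you show the ideal $I$ is contained in $\ker\iota^*$, bound $\dim_\Q H^*(X^n;\Q)/I$ from above by the spanning set $\prod_\ell\beta^{(\ell)}_{i_\ell}$, and conclude by comparing with $\dim_\Q H^*(X^k;\Q)$. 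This is shorter and avoids the inductive algebra, but it leans on two facts your write-up should make explicit: that $H^*(X;\Q)$ is finite-dimensional (true here because $X$ is a complex algebraic variety, but not a formal triviality) and that K\"unneth over the field $\Q$ gives the exact equality $\dim H^*(X^k)=(\dim H^*(X))^k$. The paper's algebraic lemma is more robust --- it works over any coefficient ring and for algebras of infinite rank --- whereas your dimension count is tailored to field coefficients and finite-dimensional cohomology, which is all that is needed in this paper since Proposition \ref{prop:new} is stated rationally. Your sign bookkeeping remark is sound: the generators $\alpha_i-\alpha_j$ generate a two-sided (graded) ideal, so substituting $\alpha_i\equiv\alpha_{i_\ell}$ inside a monomial is legitimate, and the Koszul signs incurred in regrouping factors at $i_\ell$ are absorbed into the choice of spanning set.
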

\begin{proof}
We can identify $F$ with the space of functions from the quotient set $\chi^{-1}(0)/{\sim}$. The corresponding inclusion map $\iota: X^{\chi^{-1}(0)/{\sim}}\incl X^n$ sends a function $f$ to the function $g:[n]\to X$ defined by (here $\bar i$ means the equivalence class of $i$)
\begin{equation}\label{eq:stratum-incl}
    g(i)=\begin{cases}
        P^{\chi(i)}, &\text{ if }\chi(i)\neq 0; \\
        f(\bar i), &\text{ if }\chi(i)=0.
    \end{cases}
\end{equation}

Let $R=H^*(X)$, and for $i\in [n]$, denote for convenience $\theta_i=p_i^*: H^*(X)\to H^*(X^n)$. K\"unneth's formula says $H^*(X^n)=R^{\otimes n}$, in which $\theta_1(a_1)\cdots \theta_n(a_n)$ is identified with $a_1\otimes \dots \otimes a_n$ for $a_i\in R$. Let $\epsilon:R\to \Q$ be quotient map by $H^{\geq 1}(X)$ (note that $X$ is connected so $H^0(X)=\Q$), which is also the pullback of the constant map $X\to \mathrm{pt}$. 

Let $\mu: R^{\otimes n} \to R^{\otimes \chi^{-1}(0)/{\sim}}$ be the pullback by $\iota$. We now describe $\mu(\theta_i(a))$ for $i\in [n]$ and $a\in R$. This is the pullback by $p_i\circ \iota$. It follows from that \eqref{eq:stratum-incl} that
\begin{equation}\label{eq:mu}
    \mu(\theta_i(a))=\begin{cases}
        \epsilon(a), &\text{ if }\chi(i)\neq 0; \\
        \theta_{\bar i}(a), &\text{ if }\chi(i)=0.
    \end{cases}
\end{equation} 

Since $\theta_{\bar i}(a)$ for $\bar i\in \chi^{-1}(0)/{\sim}$ and $a\in R$ generate $R^{\otimes \chi^{-1}(0)/{\sim}}$, we know $\mu$ is surjective. Let $J$ be the ideal of $R^{\otimes n}$ generated by $\theta_i(H^{\geq 1}(X))$ for $i\notin \chi^{-1}(0)$ and $\theta_i(a)-\theta_j(a)$ for $i,j\in \chi^{-1}(0), i\sim j$. It suffices to prove $J=\ker(\mu)$. 

Clearly $J\subeq \ker(\mu)$ by \eqref{eq:mu}. A useful trick to show the equality is constructing an inverse to the natural map $R^{\otimes n}/J\to R^{\otimes n}/\ker(\mu)\cong R^{\otimes \chi^{-1}(0)/{\sim}}$. To construct the desired map $\nu: R^{\otimes \chi^{-1}(0)/{\sim}}\to R^{\otimes n}/J$, by the universal property of tensor products, it suffices to construct $\nu_{\bar i}:R\to R^{\otimes n}/J$. We simply define
\begin{equation}
    \nu_{\bar i} (a) = \theta_i(a)\bmod J,
\end{equation}
where $i\in \chi^{-1}(0)$ is any lift of $\bar i$. This is well-defined because if $i,j$ are two lifts, then $\theta_i(a)-\theta_j(a)\in J$. To check that $\nu$ is the desired inverse, the only place that requires remarks is verifying that $\nu(\mu(\theta_i(a)))-\theta_i(a) \in J$ if $\chi(i)\neq 0$. Indeed, since $\mu(\theta_i(a))=\epsilon(a)\in \Q$, and $\theta_i$ is identity on $\Q$, we have
\begin{equation}
    \nu(\mu(\theta_i(a)))-\theta_i(a) = \epsilon(a)-\theta_i(a)=\theta_i(\epsilon(a)-a),
\end{equation}
which is in $J$ since $\epsilon(a)-a\in H^{\geq 1}(X)$.
\end{proof}

\begin{remark}
    It follows that \eqref{relation3} and \eqref{relation4} generate all relations from \eqref{eq:rel-totaro}.
\end{remark}

Next, we deal with the Orlik--Solomon relations. Let $B(\cA)=\Q[g_Y:Y\in \cA]$ as a free graded-commutative algebra with degree-one elements $e_Y$, with a derivation $\partial$ given by $\partial g_Y=1$. Let $I(\cA)$ be the ideal generated by the Orlik--Solomon relations \eqref{eq:os-rel}. Then $A(\cA)=B(\cA)/I(\cA)$.

\begin{lemma}\label{lem:os-braid}
    For the braid-puncture arrangement $\cA$ as before, let $J(\cA)$ be the ideal of $B(\cA)$ generated by \eqref{relation2}, \eqref{relation5}, and \eqref{relation6}. Then $J(\cA)=I(\cA)$.
\end{lemma}

\begin{remark}
    From now on, we work in $B(\cA)$ rather than $A(\cA)$, which means $g_Y$ no longer satisfies the Orlik--Solomon relations. 
\end{remark}

We will need some lemmas.

\begin{lemma}[{\cite[Lem.~3.7]{orlikterao1992}}]
    For $\varnothing\neq S\subeq \cA$, we have $g_S\in \partial g_S B(\cA)$. 
\end{lemma}
\begin{proof}
    Pick $Y\in S$. Then $g_Y g_S=0$ by graded commutativity, so $0=\partial (g_Y g_S)=g_S-g_Y \partial g_S$. 
\end{proof}

\begin{remark}\label{rmk:minimal}
    This implies the Orlik--Solomon relations are generated by $g_S$ for minimal vanishing sets $S$ and $\partial g_S$ for minimal dependent sets $S$. The key is that if $\varnothing \neq S\subeq T$, then by Leibniz rule, $\partial g_T\in (g_S,\partial g_S)B(\cA)=\partial g_S B(\cA)$ by the lemma.
\end{remark}

\begin{lemma}\label{lem:chainreduction}
    If $S_1\subeq \cA$, $S_2\subeq \cA$, and $Y\in \cA$ are all disjoint, then 
    \begin{equation}
        \partial g_{S_1\sqcup S_2} \in (\partial g_{S_1\sqcup \{Y\}}, \partial g_{S_2\sqcup \{Y\}}) B(\cA).
    \end{equation}
\end{lemma}

\begin{proof}
Write $A=g_{S_1}$, $B=g_{S_2}$ and $g=g_Y$. We have 
\begin{align}
\partial(g\partial(AB)) &= (\partial g)(\partial(AB))-g\partial^2(AB) \\
&=1\cdot \partial(AB)-0\\
&=\partial(AB),\\
\partial(AB)&= \partial(e\partial(AB))\\
&=\pm \partial(gB\partial A) \pm \partial(gA\partial B)\\
&=\pm \partial(gB)\partial(A) \pm \partial(gA)\partial B \text{ (since $\partial^2=0$)}\\
&\in (\partial(gA),\partial(gB)) B(\cA).\qedhere
\end{align}
\end{proof}

\begin{proof}[{Proof of Lemma \ref{lem:os-braid}}]
    Clearly $J(\cA)\subeq I(\cA)$. By Remark \ref{rmk:minimal}, it suffices to show that $g_S\in J(\cA)$ for minimal vanishing sets $S$ and $\partial g_S\in J(\cA)$ for minimal dependent sets $S$. These elements are (1) $g_i^s g_j^t g_\gamma$, $s\neq t$; (2) $\partial(g_\lambda)$; and (3) $\partial(g_i^s g_j^s g_\gamma)$,
    where 
    \begin{itemize}
        \item In (1)(3), $\gamma=(i=i_0\to i_1\to\dots\to i_h=j)$ is a path joining $i$ and $j$, and $g_\gamma:=g_{i_0 i_1}g_{i_1 i_2}\dots g_{i_{h-1} i_h}$. Here $h\geq 1$ and $i_0,\dots,i_h$ are distinct. 
        \item In (2), $\lambda=(i_1\to \dots \to i_h\to i_1)$ is a loop, and $g_\lambda:=g_{i_1 i_2} g_{i_2 i_3} \dots g_{i_{h-1} i_h} g_{i_h i_1}$. Here $h\geq 3$ and $i_1,\dots,i_h$ are distinct.
    \end{itemize}

    First, we prove that (2) is in $J(\cA)$ by induction on $h$. If $h=3$, then (2) is just \eqref{relation2}. If $h>3$, we set $S_1=\{\Delta_{i_1 i_2},\Delta_{i_2 i_3},\dots,\Delta_{i_{h-2} i_{h-1}}\}, S_2=\{\Delta_{i_{h-1} i_h}, \Delta_{i_h i_1}\}$, and $Y=\Delta_{i_1 i_{h-1}}$, then $\partial g_{S_1\sqcup Y}\in J(\cA)$ by the induction hypothesis, $\partial g_{S_2\sqcup Y}\in J(\cA)$ by the $h=3$ case. Applying Lemma \ref{lem:chainreduction}, we get $\partial(g_\lambda)=\partial g_{S_1\sqcup S_2}\in J(\cA)$.
    
    Next, we prove that (3) is in $J(\cA)$ by induction on $h$. The base case $h=1$ is \eqref{relation5}, and the induction step is proved similarly with $S_1=\{\Delta_i^s,\Delta_{i_0 i_1},\Delta_{i_1 i_2},\allowbreak \dots,\Delta_{i_{h-2} i_{h-1}}\}$, $S_2=\{\Delta_{i_{h-1} i_h}, \Delta_j^s\}$, and $Y=\Delta_{i_{h-1}}^s$.
    
    Finally, for (1), we note from the proven case (3) that
    \begin{equation}
        g_j^t g_\gamma - g_i^t \partial(g_i^t g_\gamma) = \partial(g_i^t g_j^t g_\gamma) \in J(\cA),
    \end{equation}
    so
    \begin{equation}
    g_j^t g_\gamma\equiv g_i^t \partial(g_i^t g_\gamma) \mod J(\cA).
    \end{equation}
    It follows that
    \begin{equation}
    g_i^s g_j^t g_\gamma \equiv g_i^s g_i^t \partial(g_i^t g_\gamma) \mod J(\cA).
    \end{equation}
    But $g_i^s g_i^t$ is just \eqref{relation6}, so $g_i^s g_j^t g_\gamma\in J(\cA)$.    
\end{proof}

\end{document}